\newtheorem{defn}{Definition} 
\newtheorem{thm}{Theorem}
\newtheorem{lem}{Lemma} 
\DeclarePairedDelimiter\ceil{\lceil}{\rceil}
\begin{document}

\title{Cops and Robbers on Toroidal Chess Graphs}

\author{Allyson Hahn\\
	North Central College\\
	amhahn$@$noctrl.edu\\
	\and 
    Neil R. Nicholson\\
	North Central College\\
	nrnicholson$@$noctrl.edu\\
	}

\date{}

\maketitle

\begin{abstract} 
We investigate multiple variants of the game Cops and Robbers.  Playing it on an $n \times n$ toroidal chess graph, the game is varied by defining moves for cops and robbers differently, always mimicking moves of certain chess pieces.  In these cases, the cop number is completely determined.
\end{abstract}

\paragraph{\bf Keywords:} Cops, robber, torus, chess graph, knight, queen.

\section{Introduction}
\label{intro}

Played on graphs, Cops and Robbers is a pursuit-evasion game between a set of cops and a set of robbers.  Briefly, the game is played as follows.  This description considers only a single robber, as in this paper we play the game with only one robber.

\begin{enumerate}
\item Given any mathematical graph $G$, $k$ cops choose up to $k$ vertices of $G$ as their starting positions (multiple cops may occupy a single vertex).
\item A single robber chooses a vertex as his starting position.
\item The cops take a turn: each cop can either stay on the vertex it currently occupies or move to an allowable\footnote{In the standard game of Cops and Robbers, a cop or robber moves by either staying on his current vertex or moving to occupy an adjacent vertex (that is, connected via an edge) to the one he currently occupies.  In Sec. \ref{defs}, we define moves differently.} vertex.
\item The robber takes a turn: the robber can either stay on the vertex he currently occupies or move to an allowable vertex.
\item The game proceeds with the cops and the robber alternating turns.
\item The cops win if, in a finite number of moves, at least one cop can occupy the same vertex as the robber.
\item The robber wins if he can guarantee to never have a cop occupy the same vertex as he occupies.
\end{enumerate}

Introduced in the early 1980s \cite{Winkler,Quillot}, it has blossomed into numerous variations and these twists continue to inspire interesting research.  The game can be varied by restricting the types of graphs it is played on \cite{Alon,Dawes,Frankl,Neufeld} and/or providing alternate definitions of how the cops or robbers move \cite{Bal,Sullivan}.  Regardless of the variation being played, certain questions are commonly asked of the game.  In particular, what is the fewest number of cops needed to guarantee their victory (called the \textit{cop number} of the graph), and if the cops can win, what is the fewest number of moves needed to win?  

These are just two of the questions asked about certain variations of the game; by no means are they the only questions of importance.  Entire texts have been written summarizing past research, applications, and open questions stemming from the original game \cite{Bonato}.  But our focus here is variants of the game played on $n \times n$ toroidal chess graphs (also called grid graphs).  Moves of the cops and robber will be defined to mirror those of various pieces in the game of chess.  While the cop number is determined in some of these scenarios in Sec. \ref{mainresults}, and those that are not investigated are described in Sec. \ref{futurequestions}.  We begin with the necessary terminology in Sec. \ref{defs}.

\section{Definitions and Preliminaries}
\label{defs}

Throughout the paper, we will assume $G$ is an $n\times n$ chess graph on the torus.  A \textit{chess graph} (also called a \textit{grid graph}) is an $n \times n$ array of vertices, with an edge between all horizontal ``neighboring" vertices as well as all vertical ``neighboring" vertices.  Placing such a graph on the torus equates to placing an additional edge from the first to the last vertex in every row, as well as the first to the last vertex in every column.  We do this because a torus can be formed by identifying the edges of a square, as in Fig. \ref{fig1}.  

We will visualize our chess graphs on the torus as simply a planar $n \times n$ array of vertices, as in Fig. \ref{fig2}. This allows us to talk about ``positions" on the chessboard, whereas visualizing the chessboard on the torus does not allow us to do so.  On the plane, referring to the ``top row of vertices" or ``moving one vertex right" makes sense; it does not when viewed explicitly on the three-dimensional torus.

\begin{figure} 
\begin{center}
\begin{tikzpicture}[thick,scale=.8]
  \node (0) at (-1,0) {};
  \node (1) at (1,0)  { };
  \draw[ultra thick, ->] (0) edge (1);
  
  \draw (-7,-2)rectangle(-3,2);
  
  \node (2) at (-7,-2) {};
  \node (3) at (-5,-2) {};
  \node (4) at (-3,-2) {};
  \node (5) at (-7,0) {};
  \node (6) at (-3,0) {};
  \node (7) at (-7,2) {};
  \node (8) at (-5,2) {};

  \draw[->] (2) edge (3);
  \draw[->] (7) edge (8);
  \draw[->>] (2) edge (5);
  \draw[->>] (4) edge (6);
  
\draw (7,0) ellipse (4 and 2);
\draw[rounded corners=28pt] (4.9,.2)--(7,-.3)--(9.1,.2);
\draw[rounded corners=24pt] (5.8,0)--(7,.3)--(8.2,0);

\draw[densely dashed] (7,-2) arc (270:90:.5 and .916);
\draw (7,-2) arc (-90:90:.5 and .916);

\node (9) at (7.46,-.61) {};
\node (10) at (7.46,-.6) {};
\draw[->>] (9) edge (10);

\draw (11,0) arc (0:-180:4 and 1.2);
\draw[densely dashed] (11,0) arc (0:180:4 and 1.2);

\node (11) at (9,-1.07) {};
\node (12) at (9.01,-1.07) {};
\draw[->] (11) edge (12);

\end{tikzpicture}
\end{center}
\caption{Identifying edges of a square to create a torus} \label{fig1}
\end{figure}
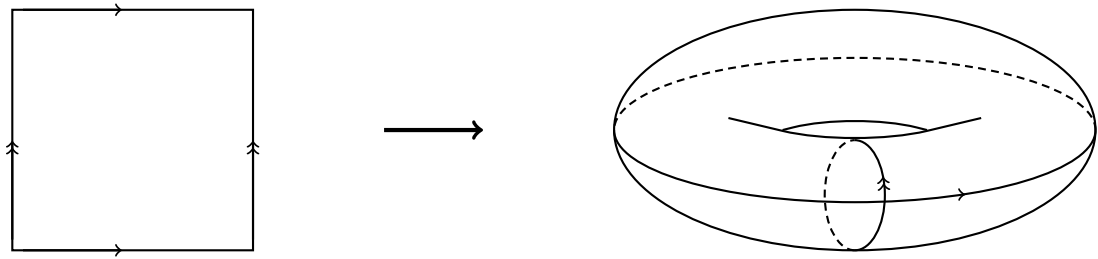

\begin{figure} 
\begin{center}
\begin{subfigure}[A $4 \times 4$ chess graph in the plane]{
\label{fig2planar}

\begin{tikzpicture}[scale=.6]

 \draw[dashed] (-4,-1)--(4,-1)--(4,7)--(-4,7)--(-4,-1);
  \path (-5,-1) rectangle(5,7);
 
 \foreach \x in {-3,-1,1,3}
   \foreach \y in {0,2,4,6}{
   \node at (\x,\y)[circle,fill,inner sep=2pt]{};
   }; 

 \draw (-3,0)--(3,0);
 \draw (-3,2)--(3,2);
 \draw (-3,4)--(3,4);
 \draw (-3,6)--(3,6);
 
 \draw (-3,0)--(-3,6);
 \draw (-1,0)--(-1,6);
 \draw (1,0)--(1,6);
 \draw (3,0)--(3,6);
\end{tikzpicture}
}
\end{subfigure}
\begin{subfigure}[A $4 \times 4$ chess graph on the torus]{
\label{fig2torus}

\begin{tikzpicture}[scale=.6]
  \draw[dashed] (-4,-1)--(4,-1)--(4,7)--(-4,7)--(-4,-1);
  \node (1) at (-4,3) {};
  \node (2) at (-4,3.01) {};
  \draw[->>] (1) edge (2);
\node (3) at (4,3) {};
  \node (4) at (4,3.01) {};
  \draw[->>] (3) edge (4);
\node (5) at (-.01,-1) {};
  \node (6) at (0,-1) {};
  \draw[->] (5) edge (6);
  \node (7) at (-.01,7) {};
  \node (8) at (0,7) {};
  \draw[->] (7) edge (8);
  
  \path (-5,-1) rectangle(5,7);
  
   \foreach \x in {-3,-1,1,3}
   \foreach \y in {0,2,4,6}{
   \node at (\x,\y)[circle,fill,inner sep=2pt]{};
   }; 
   
 \draw (-4,0)--(4,0);
 \draw (-4,2)--(4,2);
 \draw (-4,4)--(4,4);
 \draw (-4,6)--(4,6);
 
 \draw (-3,-1)--(-3,7);
 \draw (-1,-1)--(-1,7);
 \draw (1,-1)--(1,7);
 \draw (3,-1)--(3,7);
\end{tikzpicture}
}
\end{subfigure}

\end{center}
\caption{Chess graph in the plane and on the torus}
 \label{fig2} 
\end{figure}

The basic question in any specific game of Cops and Robbers is whether the cops or the robber will win.  More generally, one can ask the question, ``What is the fewest number of cops needed to guarantee victory?"  

\begin{defn}
The \textbf{cop number} of a graph $G$, denoted $c(G)$, is the minimum number of cops required to guarantee the existence of a winning strategy for the cops, regardless of the robber's initial position.
\end{defn}

Typically, a move is a choice by a cop or robber to remain on the vertex they currently occupy or to move to an adjacent vertex on the graph.  If the cops and robber all move in the same fashion, the graph can be defined in such a way to ``allow" the intended moves.  For example, if the cops and robbers are meant to mimic the moves of a rook in the game of chess, then placing an edge between every pair of vertices in the same row or column of the standard grid graph accomplishes this. However, if the cops and robber move differently, such as cops moving as rooks yet the robber moves as a pawn, then this is not possible. Defining a move for both the cop and the robber as simply moving to an adjacent vertex fails.  

To fix this problem, one can either weight edges of the graph (allowing cops and the robber to move across edges only of a certain weight), or, as we do in this paper, define a move differently than in most literature on the game of Cops and Robbers.  To that end, we have the following definitions. 

\begin{defn} \label{allowabledef}
The vertices that a cop or a robber could move to are called \textbf{allowable vertices}, and the allowable vertices for a cop's move are said to be \textbf{protected} by the cop.
\end{defn}

\begin{defn} \label{knightsdef}
Suppose $G$ is a chess graph (either on the plane or on the torus).  A cop (or robber) is \textbf{on foot} if the allowable vertices for a move are those adjacent to the vertex occupied by the cop (or robber). They are a \textbf{knight} if the allowable vertices for a move are those located $2$ columns and $1$ row, or, $1$ column and $2$ rows away from the vertex occupied by the cop (or robber).  That is, allowable vertices for a knight's move correspond to those of a knight in the game of chess.
\end{defn}

\begin{figure} 
\begin{center}
\begin{tikzpicture}[scale=.6]

 \foreach \x in {-4,-2,0,2,4}
   \foreach \y in {0,2,4,6,8}{
   \node at (\x,\y)[circle,fill,inner sep=2pt]{};
   }; 

  \node at (0,4)[circle,fill,inner sep=5pt]{};
  \node at (0,2.9)[]{Cop};
  
  \draw (-2,0) circle (9pt);
  \draw (-2,8) circle (9pt);
  \draw (-4,2) circle (9pt);
  \draw (-4,6) circle (9pt);
  \draw (2,0) circle (9pt);
  \draw (2,8) circle (9pt);
  \draw (4,2) circle (9pt);
  \draw (4,6) circle (9pt);
  
  \draw (-4.5,0)--(4.5,0);
  \draw (-4.5,2)--(4.5,2);
  \draw (-4.5,4)--(4.5,4);
  \draw (-4.5,6)--(4.5,6);
  \draw (-4.5,8)--(4.5,8);
  
  \draw (-4,-.5)--(-4,8.5);
  \draw (-2,-.5)--(-2,8.5);
  \draw (0,-.5)--(0,2.4);
  \draw (0,4)--(0,8.5);
  \draw (2,-.5)--(2,8.5);
  \draw (4,-.5)--(4,8.5);
\end{tikzpicture}
\end{center}
\caption{Vertices protected (circled) by a cop as a knight} \label{fig3}
\end{figure}
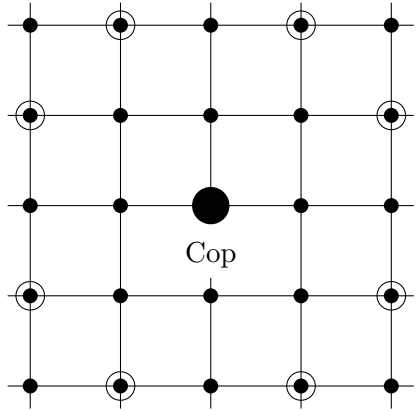

\section{Main Results}
\label{mainresults}

The standard question addressed in the game of Cops and Robbers is when both all of the cops and the robber are on foot.  We begin this section by investigating the situation where the cops are knights and the robber is on foot.  

\subsection{Cops as knights, robber on foot}
\label{copsknightrobberfoot}

In general, a large enough number of cops always has a winning strategy, regardless of how the cops move (a graph with $n$ vertices will have a cop number no greater than $n$).  Theorems \ref{smallchessknights} and \ref{arbitraryknights} determine, in the situation when cops are knights and the robber is on foot, an explicit cop number for every $n \times n$ ($n \geq 3$) chess graph on the torus.

\begin{thm} \label{smallchessknights}
Let $G$ be the $3 \times 3$ or $4 \times 4$ chess graph on the torus, with cops as knights and the robber on foot.  Then, $c(G) = 2$.
\end{thm}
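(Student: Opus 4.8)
The plan is to establish the two inequalities $c(G)\ge 2$ and $c(G)\le 2$ separately, in each case handling both the $3\times 3$ and the $4\times 4$ toroidal board.

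\textbf{Lower bound ($c(G)\ge 2$).} I would exhibit an evasion strategy for the robber against a single knight-cop. On each of these boards the eight knight-offsets reduce (modulo the board size) to exactly four distinct nonzero offsets, so a cop at $c$ occupies one vertex and protects exactly four others; call these five vertices the ones \emph{controlled} by the cop, and note that a cop at $c$ can capture at a vertex $r'$ on its next turn precisely when $r'$ is controlled by $c$. A foot-robber at $r$ can, on its turn, occupy exactly five vertices --- $r$ itself and its four foot-neighbours --- forming a ``plus'' in the torus grid. Since the robber's five reachable vertices and the cop's five controlled vertices are both $5$-element sets, the robber has a safe reply (one of its reachable vertices not controlled by the cop) unless the two sets coincide; but they never do, because the robber's plus contains a vertex, namely its centre $r$, that is grid-adjacent to the other four, whereas the cop's controlled set contains no such vertex (the cop's own vertex is at grid-distance $\ge 2$ from each vertex it protects). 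Hence, starting from any vertex not controlled by the initial cop, the robber can forever stay put or move to a vertex the cop cannot reach next turn, so one cop can never win.

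\textbf{Upper bound ($c(G)\le 2$).} I would give an explicit winning strategy for two knight-cops built around a ``net'': a position, reached on the robber's turn, in which the robber's vertex is protected or occupied by a cop (so he must move) and every vertex he can move to is also protected or occupied by a cop, forcing capture. For the $3\times 3$ board this is short: placing the cops at two knight-adjacent vertices makes the two occupied vertices together with the protected vertices cover $7$ of the $9$ vertices, the two uncovered vertices are not foot-adjacent to one another, and every foot-move from either of them leads back into the covered set; so after the cops are in place the robber is driven within one move onto one of these two immobilized vertices, whereupon one further cop move threatens him while the other cop seals all four of his escapes. The $4\times 4$ board uses the same template but the cops must \emph{chase}: two cops control at most $8$ of the $16$ vertices, so no static configuration pins the robber, and the cops instead repeatedly reposition a partial net, steadily shrinking the set of vertices the robber can safely reach until a genuine net is forced.

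\textbf{Main obstacle.} The crux is this herding step on the $4\times 4$ board: one must show that the two cops can reposition their partial net at least as fast as the robber can relocate, so the robber cannot dodge it indefinitely. In principle both games are finite (well under a few thousand positions each), so the bound is certifiable by backward induction on game states; the actual work is to organize the finitely many relative positions --- exploiting the translational and dihedral symmetries of the torus --- into a compact, reader-checkable strategy. The lower bound and the $3\times 3$ upper bound are comparatively routine; the $4\times 4$ chase is where the care is needed.
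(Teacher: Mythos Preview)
Your lower bound is sound and in fact more careful than the paper's: the observation that the eight knight offsets collapse to exactly four nonzero residues on both the $3\times 3$ and $4\times 4$ tori, together with the structural mismatch between the robber's plus (which has a centre foot-adjacent to its other four vertices) and the cop's controlled set (whose occupied vertex is at grid distance $\ge 2$ from every protected vertex), cleanly rules out equality of the two $5$-sets.

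Your $3\times 3$ upper bound is essentially correct but contains a misstatement. After placing the cops at knight-adjacent vertices, the two uncovered vertices are indeed isolated from each other and have all four foot-neighbours covered. However, your claim that ``one further cop move threatens him while the other cop seals all four of his escapes'' is false as stated: on the $3\times 3$ torus a single knight protects four vertices that form a translate of $\{(1,1),(1,2),(2,1),(2,2)\}$, and the four foot-neighbours of any vertex never form such a translate. What \emph{is} true is that the two cops together can, in one move, reach positions whose combined controlled set contains the robber and all four of his escapes. So the net exists, but it is a joint net, not a one-cop seal.

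The real gap is the $4\times 4$ upper bound, which you leave as ``finite game, backward induction, exploit symmetry.'' That is a certificate of decidability, not a strategy. The paper's proof supplies the organizing idea you are missing: choose the two cops foot-adjacent in a row and thereafter move them by the \emph{same} knight translation on every turn. Because the torus is translation-homogeneous, this preserves the cops' relative configuration, so the entire game is described by the robber's position relative to the cop pair. There are exactly six unprotected relative positions; two of them (the ``star'' vertices) force capture within two moves, and for each of the other four a single explicit cop translation carries the robber's relative position to a star. This collapses the chase to a six-line table rather than a search over the full position space. Your vaguer ``shrink the safe set'' plan would eventually work, but without the move-both-cops-identically reduction it is not yet a proof. (Incidentally, your remark that two cops control at most $8$ of the $16$ vertices undercounts: two cops with disjoint controlled sets control $10$; this does not affect the argument, but it is worth correcting.)
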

\begin{proof}
In either case, a robber can indefinitely evade a single cop, since one cop cannot protect the $5$ allowable vertices for robber's move (see Fig. \ref{fig3}). Thus, we need only show that $2$ cops have a winning strategy.

Let $G$ be the $3 \times 3$ graph on the torus.  Place the two cops on the leftmost vertices of the first row.  Every vertex in the second and third row is protected, meaning the robber can only place himself on the remaining vertex of the first row.  Choose to have both cops move to the second row.  In doing so, the vertex immediately below the robber will be occupied by a cop.  Thus, the robber is unable to move to the second row (and consequently to the only unprotected vertex), meaning no matter where he moves (or stays), he will be on a vertex protected by a cop.  Hence, the cops have a winning strategy.

Suppose now that $G$ is the $4 \times 4$ graph on the torus.  Place the two cops on the leftmost vertices of the first row.  We will choose to move the two cops identically for any possible move.  In doing so, we can always consider the resulting chess graph to look like that in Fig. \ref{fig8} (since $G$ is on the torus). Note that there are two vertices (vertices $\star$ and $\star\star$ in Fig. \ref{fig8}) that if occupied by the robber would guarantee his capture within two moves by the cops, as all allowable vertices for the robber's moves are protected by the two cops.  We need only show that regardless of the robber's initial position, then, the cops are able to move so that the robber then occupies one of these guaranteed victory vertices.  The initial location of the robber (the non-protected vertices of Fig. \ref{fig8}) and such a move by the cops are listed below.

\begin{center}
\begin{tabular}{c|c|c}
Robber's initial position & Cop's move & Robber's resulting position\\ \hline
Vertex $1$ & up $2$, left $1$ & Vertex $ \star\star  $\\
Vertex $2$ & up $2$, right $1$ & Vertex $ \star$\\
Vertex $3$ & up $1$, left $2$ & Vertex $\star $\\
Vertex $4$ & up $1$, left $2$ & Vertex $\star\star $\\
Vertex $5$ & down $1$, left $2$ & Vertex $\star $ \\
Vertex $6$ & down $1$, left $2$ & Vertex $\star\star $\\
\end{tabular}
\end{center}
\end{proof}

\begin{figure} 
\begin{center}
\begin{tikzpicture}[scale=.6]

 \foreach \x in {-3,-1,1,3}
   \foreach \y in {0,2,4,6}{
   \node at (\x,\y)[circle,fill,inner sep=2pt]{};
   }; 
  
  \node at (-3,6)[circle,fill,inner sep=5pt]{};
  \node at (-3,5)[]{Cop};

  \node at (-1,6)[circle,fill,inner sep=5pt]{};
  \node at (-1,5)[]{Cop};
  
  \draw (-3.5,0)--(3.5,0);
  \draw (-3.5,2)--(3.5,2);
  \draw (-3.5,4)--(3.5,4);
  \draw (-3.5,6)--(3.5,6);
  
  \draw (1,4) circle (9pt);
  \draw (3,4) circle (9pt);
  \draw (-3,2) circle (9pt);
  \draw (-1,2) circle (9pt);
  \draw (1,2) circle (9pt);
  \draw (3,2) circle (9pt);
  \draw (1,0) circle (9pt);
  \draw (3,0) circle (9pt);  
  
  \node at (1,1)[]{$\star$};
  \node at (3,1)[]{$\star \star$};
  \node at (1,5)[]{$1$};
  \node at (3,5)[]{$2$};
  \node at (-3,3)[]{$3$};
  \node at (-1,3)[]{$4$};
  \node at (-3,-1)[]{$5$};
  \node at (-1,-1)[]{$6$};
  
  \draw (-3,-.3)--(-3,2.5);
  \draw (-3,3.5)--(-3,4.5);
  \draw (-3,5.5)--(-3,6.5);
  
  \draw (-1,-.3)--(-1,2.5);
  \draw (-1,3.5)--(-1,4.5);
  \draw (-1,5.5)--(-1,6.5);
  
  \draw (1,-.5)--(1,.5);
  \draw (1,1.5)--(1,3.5);
  \draw (1,5.5)--(1,6.5);
  
  \draw (3,-.5)--(3,.5);
  \draw (3,1.5)--(3,3.5);
  \draw (3,5.5)--(3,6.5);

\end{tikzpicture}
\end{center}
\caption{A cop's winning strategy on the $4 \times 4$ chess graph on the torus} \label{fig8}
\end{figure}
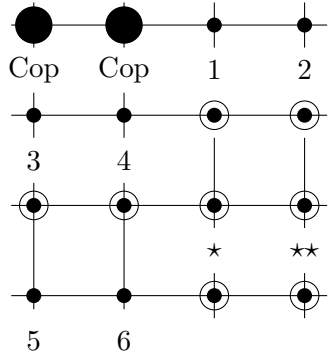

The previous theorem proves that just two knights can capture a robber on foot on the $3 \times 3$ or $4 \times 4$ chess graph on the torus.  The following lemma proves that a robber on foot can always evade the capture of two knights on a larger $n \times n$ chess graph on the torus.  The theorem that follows, however, proves that regardless how large $n$ is, just three knights suffice in capturing the robber.

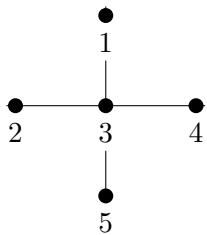
\begin{figure} 
\begin{center}
\begin{tikzpicture}[scale=.6]

   \node at (-2,2)[circle,fill,inner sep=2pt]{};
   \node at (0,0)[circle,fill,inner sep=2pt]{};
   \node at (0,2)[circle,fill,inner sep=2pt]{};
   \node at (0,4)[circle,fill,inner sep=2pt]{};
   \node at (2,2)[circle,fill,inner sep=2pt]{};
   
   \node at (0,3.4)[]{$1$};
   \node at (-2,1.4)[]{$2$};
   \node at (0,1.4)[]{$3$};
   \node at (2,1.4)[]{$4$};
   \node at (0,-.6)[]{$5$};

  \draw (-2.2,2)--(2.2,2);
  \draw (0,4)--(0,4.2);
  \draw (0,2)--(0,3);
  \draw (0,0)--(0,1);
\end{tikzpicture}
\end{center}
\caption{Unprotectable by two cops} \label{fig4}
\end{figure}

\begin{lem} \label{5vertslemma}
If $G$ is the $n \times n$ chess graph on the torus, $n \geq 5$, then two cops as knights cannot protect the five vertices as in Fig. \ref{fig4}.
\end{lem}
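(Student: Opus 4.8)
The plan is to reduce Lemma~\ref{5vertslemma} to a purely local counting statement. Place the centre vertex of the configuration (labelled $3$ in Fig.~\ref{fig4}) at the origin of $\mathbb{Z}_n \times \mathbb{Z}_n$, so that the five vertices become $c = (0,0)$ together with its four orthogonal neighbours $N = \{(1,0),(-1,0),(0,1),(0,-1)\}$; write $S = \{c\}\cup N$, and let $M = \{(\pm 1,\pm 2),(\pm 2,\pm 1)\}$ be the eight knight-move vectors. A knight on vertex $p$ protects vertex $v$ precisely when $p - v \in M$ (coordinates read mod $n$), and since $n \ge 5$ the eight translates $p + M$ are distinct. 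I claim that \emph{for every $p$, the knight on $p$ protects at most two vertices of $S$}. This immediately gives the lemma: the set protected by two cops is the union of their two protected sets, and a union of two sets, each meeting $S$ in at most two points, cannot contain all five vertices of $S$.

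To prove the claim I would split on whether the knight protects $c$. If it does, then $p \in M$, and I would use the classical fact that on the infinite board two orthogonally adjacent squares have no common knight-move neighbour (they are at knight-distance $3$), which shows a knight's-move square from $c$ reaches no vertex of $N$; hence such a knight protects only $c$ among $S$. This persists on the torus unless $n$ is small, so I would treat $n = 5$ by hand: there the wrap-around lets a knight's-move square from $c$ reach exactly one vertex of $N$, but never two, as one checks from the eight positions of $M \bmod 5$ (the $8$-fold dihedral symmetry of the ``plus'' $S$ reduces this to two representatives). If instead the knight does not protect $c$, it suffices to bound by two the number of vertices of $N$ it protects. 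If it protected three of the four, then --- since any three of the four orthogonal neighbours of $c$ contain an ``opposite pair'' $\{(1,0),(-1,0)\}$ or $\{(0,1),(0,-1)\}$ --- it would protect an opposite pair; writing $p - (0,1),\, p - (0,-1) \in M$ and subtracting forces the two knight vectors to differ by $(0,\pm 2) \bmod n$, and enumerating which pairs in $M$ can do so pins $p$ to a short list (essentially $\{(2,0),(-2,0)\}$, with a further possibility when $n = 6$), each of which is checked directly to protect none of the remaining vertices of $S$. The symmetric computation handles the horizontal opposite pair.

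The part that takes genuine care, rather than routine bookkeeping with the eight vectors of $M$, is the torus wrap-around at the smallest admissible values of $n$: the slick statements ``adjacent squares have no common knight-neighbour'' and ``no unit vector is a difference of two knight vectors'' are true on the plane but fail modulo $5$ (and the $(0,\pm 2)$ enumeration picks up an extra case modulo $6$). So I would prove the at-most-two bound structurally for $n \ge 6$ and settle $n = 5$ by the short symmetry-reduced enumeration described above.
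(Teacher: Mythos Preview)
Your argument is correct, and the key lemma --- that a single knight protects at most two of the five ``plus'' vertices for every $n\ge 5$ --- is true; once you have it, two knights cover at most four of the five and the result follows. Your case split (protecting $c$ versus not) and the reduction to opposite pairs in $N$ are sound, and you have correctly flagged the only places where the torus wrap-around matters: the unit-vector difference can occur among knight vectors only when $n=5$, and the $(0,\pm2)$ difference picks up the extra solutions $p=(\pm 3,\pm1)$ (up to symmetry) only when $n=6$. In each of those finitely many extra positions the knight still meets $S$ in exactly two vertices, so the bound survives.

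The paper's proof reaches the same conclusion by a shorter, more geometric case analysis that never isolates the ``at most two'' statement. It fixes attention on the horizontal triple (your vertices $(-1,0),(0,0),(1,0)$), observes from the knight-move pattern that no single knight protects all three, and then splits on whether some cop protects an adjacent pair of that triple or the two endpoints; in either branch the remaining three vertices of $S$ form a configuration that the second knight visibly cannot cover. The $n\ge 5$ hypothesis enters only in the ``endpoints'' branch, mirroring your $n=5$ wrap-around check but handled by a glance at the picture rather than an enumeration of vectors in $M$. Your approach is more systematic and yields a reusable per-cop bound (which would scale if one asked about three or more knights), at the cost of the explicit small-$n$ bookkeeping; the paper's is quicker for this single lemma because it exploits the specific geometry of the ``plus'' directly.
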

\begin{proof}
Suppose two cops as knights are placed on an $n \times n$, $n \geq 5$, chess graph on the torus.  First note that any cop located on a vertex of Fig. \ref{fig4} could not protect any of the other vertices (in reference to Fig. \ref{fig4}, as will be all vertices mentioned in this proof).  Moreover, Fig. \ref{fig3} shows that a single cop cannot protect all of vertices $2$, $3$, and $4$ .  We have, then, two cases to consider: of these three vertices, one cop protects two adjacent ones or neither cop protects adjacent ones.

Without loss of generality, consider the first case of cop $A$ protecting vertices $2$ and $3$.  Figure \ref{fig3} shows that cop $A$ cannot protect vertices $1$ or $5$, meaning cop $B$ must protect all of $1$, $4$, and $5$. This is impossible, as Fig. \ref{fig3} exhibits.

Then consider the case of cop $A$ protecting vertices $2$ and $4$.  Again, Fig. \ref{fig3} shows that $A$ cannot protect any of vertices $1$, $3$ or $5$ (since $n \geq 5$).  Yet it is impossible for cop $B$ to protect these three adjacent vertices as well, proving the desired result.
\end{proof}

Armed with this lemma, we are ready to proceed in determining the cop number for the general $n \times n$, $n \geq 5$, chess graph on the torus.

\begin{thm} \label{arbitraryknights}
If $G$ is the $n \times n$ chess graph on the torus, $n \geq 5$, with all cops being knights and the robber being on foot, then $c(G) = 3$.
\end{thm}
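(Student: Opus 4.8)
The plan is to establish the two inequalities $c(G)\ge 3$ and $c(G)\le 3$ separately. For the lower bound I would convert Lemma \ref{5vertslemma} into an explicit evasion strategy for a single robber against two knight-cops. Initially the robber picks any vertex that is neither occupied by a cop nor protected by one; this is possible because the two cops occupy and protect at most $2+16<25\le n^{2}$ vertices. Thereafter, at the start of each of his turns the robber sits on some cop-free vertex $v$, and — since $n\ge 5$ — the set consisting of $v$ and its four on-foot neighbours is five distinct vertices forming, by the vertex-transitivity of the torus, a copy of the ``plus'' of Fig. \ref{fig4}. By Lemma \ref{5vertslemma}, after the two cops move at least one of these five vertices is unprotected; a short check — using that a knight never protects its own square, and (from Fig. \ref{fig3}) never protects more than two of the five plus-vertices from any one square — shows that among the unprotected plus-vertices one can always be chosen that is also cop-free, and the robber moves there (or stays put, if $v$ itself qualifies). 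This maintains the invariant ``robber on a cop-free vertex at the start of his turn,'' so the robber is never caught, giving $c(G)\ge 3$.

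For the upper bound I would exhibit a winning strategy for three cops. The engine of the argument is the observation dual to Lemma \ref{5vertslemma}: three knights \emph{can} protect a plus — one knight placed two rows from the centre protects the two column-neighbours of the centre, one placed two columns from the centre protects the two row-neighbours, and a third placed a knight's-move from the centre protects the centre itself. Hence, if the three cops can ever reach, on their turn, a configuration protecting all five of the robber's on-foot-allowable vertices, then on the next cop turn some cop steps onto the robber's square wherever he went, and the cops win. So it is enough to show that three cops can force such an ``encircling'' configuration in finitely many moves. I would organize this in phases: (i) an approach phase, in which each cop drives its taxicab distance to the robber down to a bounded amount — straightforward, since a knight changes position by $\ell_1$-distance $3$ each turn while the foot-robber changes his by at most $1$, so (working two cop-moves at a time if needed) the cops steadily close in; (ii) an encircling phase, in which the three nearby cops are maneuvered into a plus-covering configuration around the robber, reducing by the toroidal symmetry to a canonical local picture as in the proof of Theorem \ref{smallchessknights}; and (iii) the one-move capture just described.

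The hard part will be phase (ii). Unlike on-foot cops, knight-cops cannot take unit steps and cannot ``pass through'' an intermediate square, so they cannot shadow the robber one cell at a time; when the robber shifts by $\pm e_{i}$ the cops cannot in general re-establish the same surrounding formation in a single reply. The way around this is to analyze the cops' play in pairs of moves: between two successive ``effective'' cop positions the robber moves only twice and hence by $\ell_1$-distance at most $2$, whereas over two of its turns a knight can realize essentially any displacement of the appropriate parity within a radius of about $6$ — in particular it can also effectively stand still. That slack should be enough for the three cops both to keep closing in and ultimately to coordinate into the plus-covering configuration. The bulk of the write-up would then be a finite case analysis — in the spirit of the move-tables in the proof of Theorem \ref{smallchessknights} — verifying that a suitable lexicographic progress measure (for instance, the size of the robber's reachable region, with the total cop--robber distance as a tie-breaker) strictly decreases against every robber response, and that wraparound on the torus never hands the robber an escape.
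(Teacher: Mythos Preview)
Your lower-bound argument is essentially the paper's Claim~1, with the added care of ruling out that the only unprotected plus-vertex might be cop-occupied; the paper glosses over this, so your version is actually the cleaner of the two.

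The upper bound, however, diverges from the paper, and your phase~(ii) is a genuine gap. The paper never tries to steer three independently moving knights into a plus-covering formation and never invokes a progress measure. Instead, it freezes the three cops into a single rigid pattern (Fig.~\ref{fig5}: two cops in one row, four columns apart, and a third cop two rows above, centered between them) and moves all three \emph{identically} on every cop turn. Because the pattern is preserved, the $\ell_1$-distance argument of your phase~(i) applies to the formation as a whole, and the robber is eventually forced onto a vertex of the fixed $5\times 3$ block that travels with the cops (Claim~2). The endgame is then a seven-case analysis (Fig.~\ref{fig7}) of the robber's location within that block: three of the cases already match the two-cop trap of Fig.~\ref{fig6} (your ``plus minus the centre'' picture), and each remaining case is pushed into one of those in at most two further identical formation moves, after which the third cop finishes.

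Your plan, by contrast, defers phase~(ii) to an unspecified lexicographic argument. The particular measure you propose is not well-defined --- ``the robber's reachable region'' is all of $G$ unless the cops already confine him --- and the tie-breaker ``total cop--robber distance'' need not decrease monotonically for knights at short range, since a knight cannot take a unit step and may have to move away before returning. More importantly, nothing you have written forces the three cops to arrive \emph{simultaneously} on the three distinct squares that jointly cover the plus; the rigid-formation trick is precisely what buys that simultaneity for free and collapses the coordination problem to a small table of cases. I would replace your phase~(ii) entirely with the paper's device: lock the cops into a fixed pattern, chase as a unit, and then do the finite case analysis inside the pattern.
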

\begin{proof}
Let $G$ be the $n \times n$ ($n \geq 5$) chess graph on the torus, and assume all cops are knights and the robber is on foot.  We prove $c(G) = 3$ by showing that the robber can indefinitely evade $2$ cops but that $3$ cops have a winning strategy.\\
 
\noindent \textit{Claim 1}: $c(G) > 2$\\

On any given move, a robber has five potential options: stay on the vertex he is located on or move to one of the four adjacent vertices to his current location.  These locations correspond to the five vertices of Fig. \ref{fig4} and Lemma \ref{5vertslemma}.  Because one of these vertices will always be unprotected when there are two cops, the robber can guarantee he is never caught (staying at his current location, vertex $3$ in Fig. \ref{fig4}, if that vertex is unprotected, or moving to whichever of vertices $1$, $2$, $4$, or $5$ is unprotected.).  Thus, $c(G) > 2$.\\

\noindent \textit{Claim 2}: Three cops placed as in Fig. \ref{fig5} can, in a finite number of moves, move so that the robber is on a vertex within the rectangular array.\\

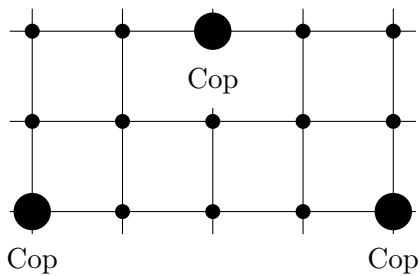
\begin{figure} 
\begin{center}
\begin{tikzpicture}[scale=.6]

 \foreach \x in {-4,-2,0,2,4}
   \foreach \y in {0,2,4}{
   \node at (\x,\y)[circle,fill,inner sep=2pt]{};
   }; 

  \node at (-4,0)[circle,fill,inner sep=5pt]{};
  \node at (-4,-1.1)[]{Cop};
    
  \node at (0,4)[circle,fill,inner sep=5pt]{};
  \node at (0,2.9)[]{Cop};
  
  \node at (4,0)[circle,fill,inner sep=5pt]{};
  \node at (4,-1.1)[]{Cop};

  \draw (-4.5,0)--(4.5,0);
  \draw (-4.5,2)--(4.5,2);
  \draw (-4.5,4)--(4.5,4);
  
  \draw (-4,-.5)--(-4,4.5);
  \draw (-2,-.5)--(-2,4.5);
  \draw (0,-.5)--(0,2.3);
  \draw (0,3.6)--(0,4.5);
  \draw (2,-.5)--(2,4.5);
  \draw (4,-.5)--(4,4.5);
  
\end{tikzpicture}
\end{center}
\caption{Positioning cops to ``chase" the robber} \label{fig5}
\end{figure}

Recall that the distance between two vertices is the length of the shortest path between them.  Then, consider subsequent moves by a cop and the robber. Suppose they are on vertices whose distance between them is $n$ units ($n$ arbitrarily large).  A single move by the robber can increase this distance by at most $1$ unit, whereas a move by the cop (a knight) can decrease this distance by $3$ units.  Hence, in subsequent moves, when the cop and robbers are sufficiently far enough apart, the distance between the two can be guaranteed to decrease.  Moreover, the cop can, in a finite number of moves, move to be no more than a distance of $2$ units from the robber.

If three cops are placed on the graph as in Fig. \ref{fig5}, choose to always move the three cops identically, preserving the arrangement of the cops.  By the preceding paragraph, in a finite number of moves, the arrangement of cops can move so that the middle vertex of this array (second row, third column) is no more than $2$ units away from the robber.  This means that the robber is located either within the array, as desired, or located directly on the vertex above or the vertex three below the cop in the top row, just outside the array.  Regardless of his next move, the cops can choose to move two units (up if the robber is located above the array; down if the robber is below the array) and one unit right.  Upon doing so, the robber will lie on a vertex in the array, as desired.

Thus, by Claim $2$, it is enough to show that the cops can catch the robber once he lies within the array of Fig. \ref{fig5}.  To do this, we will rely on Claim $3$.\\

\noindent \textit{Claim 3}: If the robber is ever in the position of Fig. \ref{fig6} (or some symmetrical variant of them), he will be caught.\\

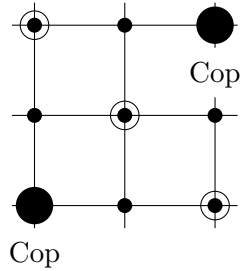
\begin{figure} 
\begin{center}
\begin{tikzpicture}[scale=.6]

 \foreach \x in {-2,0,2}
   \foreach \y in {0,2,4}{
   \node at (\x,\y)[circle,fill,inner sep=2pt]{};
   }; 

  \node at (-2,0)[circle,fill,inner sep=5pt]{};
  \node at (-2,-1.1)[]{Cop};
    
  \node at (2,4)[circle,fill,inner sep=5pt]{};
  \node at (2,2.9)[]{Cop};

  \draw (-2.5,0)--(2.5,0);
  \draw (-2.5,2)--(2.5,2);
  \draw (-2.5,4)--(2.5,4);
  
  \draw (0,-.5)--(0,4.5);
  \draw (-2,-.5)--(-2,4.5);
  \draw (2,-.5)--(2,2.4);
  \draw (2,3.5)--(2,4.5);

  \draw (-2,4) circle (9pt);
  \draw (0,2) circle (9pt);
  \draw (2,0) circle (9pt);
  
\end{tikzpicture}
\end{center}
\caption{Locations guaranteeing capture (circled)} \label{fig6}
\end{figure}

Notice that the two cops in Fig. \ref{fig6} protect all vertices adjacent to the robber's position (but not the vertex the robber lies on).  Thus, the robber cannot move without being caught.  At this point, the third cop can repeatedly move until he protects the robber's vertex (which is possible since every space on a chessboard can be reached by a knight). 

Let us consider then the possible locations of the robber once he is located within the array of Fig. \ref{fig5}.  Due to the symmetry of the array, it is enough to consider the labeled vertices  of Fig. \ref{fig7}.

\begin{figure} 
\begin{center}
\begin{tikzpicture}[scale=.6]

 \foreach \x in {-4,-2,0,2,4}
   \foreach \y in {0,2,4}{
   \node at (\x,\y)[circle,fill,inner sep=2pt]{};
   }; 

  \node at (-4,0)[circle,fill,inner sep=5pt]{};
  \node at (-4,-1.1)[]{$A$};
    
  \node at (0,4)[circle,fill,inner sep=5pt]{};
  \node at (0,3)[]{$B$};
  
  \node at (4,0)[circle,fill,inner sep=5pt]{};
  \node at (4,-1.1)[]{$C$};

  \draw (-4.5,0)--(4.5,0);
  \draw (-4.5,2)--(4.5,2);
  \draw (-4.5,4)--(4.5,4);
  
  \draw (-4,-.5)--(-4,.55);
  \draw (-4,1.5)--(-4,2);
  \draw (-4,2)--(-4,2.55);
  \draw (-4,3.5)--(-4,4.5);
  
  \draw (-2,-.5)--(-2,.55);
  \draw (-2,1.5)--(-2,2);
  \draw (-2,2)--(-2,2.55);
  \draw (-2,3.5)--(-2,4.5);
  
  \draw (0,-.5)--(0,.55);
  \draw (0,1.5)--(0,2);
  \draw (0,2)--(0,2.55);
  \draw (0,3.5)--(0,4.5);
  
  \draw (0,3.6)--(0,4.5);
  
  \draw (2,-.5)--(2,4.5);
  \draw (4,-.5)--(4,4.5);

  \draw (-4,4) circle (9pt);
  \node at (-4,3)[]{$1$};
  \draw (-2,4) circle (9pt);
  \node at (-2,3)[]{$2$};
  \draw (-4,2) circle (9pt);
  \node at (-4,1)[]{$3$};
  \draw (-2,2) circle (9pt);
  \node at (-2,1)[]{$4$};
  \draw (0,2) circle (9pt);
  \node at (0,1)[]{$5$};
  \draw (-2,0) circle (9pt);
  \node at (-2,-1.1)[]{$6$};
  \draw (0,0) circle (9pt);
  \node at (0,-1.1)[]{$7$};  
\end{tikzpicture}
\end{center}
\caption{Cops $A$, $B$, $C$ and robber locations (circled) to consider} \label{fig7}
\end{figure}
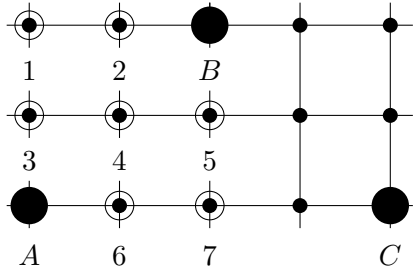

Vertices $1$, $4$ and $7$, by Claim $3$, guarantee the robber's capture.  Consider then the remaining four possible locations of the robber.  

Vertex $5$: The robber must move, as he is on a vertex protected by cop $A$.  However, moving to any vertex adjacent to his position will guarantee his capture.  He cannot move vertically, as moving upwards is not an option (the vertex is occupied by cop $B$), and Claim $5$ guarantees his capture if he moves down.  A horizontal move puts him on vertex $4$ (or symmetrical opposite vertex), both of which guarantee his capture by the comments in the preceding paragraph.  

Vertex $6$: Since vertex $6$ is protected by cop $B$, the robber must move.  His only option is to move to the vertex directly below vertex $6$, similar to the single move the robber could make when sitting on vertex $5$.  Upon his moving, choose to move all three cops simultaneously down two vertices and left one vertex.  The result is that the robber is now occupying vertex $5$ of Fig. \ref{fig7}.  As we see above, the cops have guaranteed victory in this scenario.

Vertex $3$:  If the robber is on vertex $3$, as in previous cases, he only has one option for escaping capture. In this scenario he must move left one vertex.  As in the previous case, have all cops move identically, though in this case they move left two vertices and up one vertex.  The result is that the robber is now located on vertex $6$ of Fig. \ref{fig7}, guaranteeing his capture.

Vertex $2$: As in the other cases, the robber is forced to move to an adjacent vertex; he must move up one vertex.  Move all cops up two vertices and left one vertex. The robber now occupies vertex $6$ of Fig. \ref{fig7}, once again guaranteeing his capture.
\end{proof}

\subsection{Cops with a chief, speedy robber}
\label{}

The variation of the game in the previous subsection considered ``all cops to be equal," as in every cop moved identically.  Our attention shifts now to a variation of the game where all cops \textit{do not} all move identically.  In particular, one cop, whom we refer to as the ``chief," moves as a queen in the game of chess.  We will assume from this point forward that the graph $G$ is an arbitrarily large chess graph.

\begin{defn} \label{chiefdef}
 A cop is a \textbf{chief} if the allowable vertices for a move are those located in the same row, column, or diagonal as the vertex occupied by the cop.
\end{defn}

Determining the cop number, then, of a graph with one chief and the remaining cops all being on foot equates to simply asking, ``How many additional cops, all on foot, must  exist to capture the robber?"  Lemma \ref{robberonfoot} shows that if the robber is on foot, then the chief needs no additional help.\\

\begin{lem} \label{robberonfoot}
If the robber is on foot, then a single cop, as a chief, has a winning strategy.  That is, $c(G) = 1$.
\end{lem}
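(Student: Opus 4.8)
The plan is to hand the single chief an explicit winning strategy; since every graph has $c(G)\ge 1$, this suffices. I would coordinatize the torus as $\mathbb{Z}_n\times\mathbb{Z}_n$, writing a vertex $(i,j)$ with $i$ its row and $j$ its column, so that a chief on $(q_1,q_2)$ protects exactly the vertices $(i,j)$ with $i\equiv q_1$, or $j\equiv q_2$, or $i-j\equiv q_1-q_2$, or $i+j\equiv q_1+q_2$ (all mod $n$), while a robber on foot at $(r_1,r_2)$ may move only to $(r_1,r_2)$, $(r_1\pm1,r_2)$, or $(r_1,r_2\pm1)$. The whole argument is built around what I will call a \emph{column-control} position: it is the robber's turn and the chief occupies some vertex $(q_1,r_2)$ in the robber's column. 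From any starting position the chief reaches such a position in one move (step along its own row onto $(q_1,r_2)$), so it is enough to show that from every column-control position the chief wins.

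I would then record two short calculations involving the cyclic row-gap $d=\min\bigl(|q_1-r_1|,\,n-|q_1-r_1|\bigr)$. First, if $d\le 1$ in a column-control position the robber is already lost: for $d=0$ the chief is on the robber's vertex, and for $d=1$ all five of the robber's candidate vertices are protected, since the three vertical candidates lie in the chief's column and a direct check shows $(r_1,r_2+1)$ and $(r_1,r_2-1)$ lie one on the chief's diagonal and one on its anti-diagonal. Second, if $d\ge 2$ then the robber's only moves avoiding immediate capture are the two horizontal steps $(r_1,r_2\pm1)$: staying or moving vertically keeps him in the chief's column, and the conditions $d\ne 0,1$ are exactly what force $(r_1,r_2\pm1)$ off all four of the chief's lines.

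The crux --- and the step whose write-up I expect to need the most care --- is the pursuit move. Suppose $d\ge 2$, so by the previous point the robber is forced to move to $(r_1,r_2+\varepsilon)$ for some $\varepsilon\in\{+1,-1\}$. From $(q_1,r_2)$ the chief can move along its diagonal to $(q_1+\varepsilon,\,r_2+\varepsilon)$ or along its anti-diagonal to $(q_1-\varepsilon,\,r_2+\varepsilon)$; either way it re-enters the robber's new column, and the new row-offset is $(q_1-r_1)\pm1$ with the sign at the chief's discretion, so the chief picks the sign making the new cyclic row-gap exactly $d-1$. The only thing to confirm is that this choice is always legal, including in the even-$n$ tie $d=n/2$ --- it is, because both signs give genuine chief moves --- and one should note that the robber's row $r_1$ is unchanged by a horizontal move, so the gap is indeed shrunk by the chief alone.

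These pieces assemble by induction on the cyclic row-gap $d$ at the start of the robber's turn in a column-control position: the base case $d\le 1$ is the first calculation, and for $d\ge 2$ the second calculation forces a horizontal robber move while the pursuit move answers with a column-control position of gap $d-1$, to which the inductive hypothesis applies. Since $d\le\lfloor n/2\rfloor$ initially, capture occurs within roughly $\lfloor n/2\rfloor$ rounds of the chief's first move, giving $c(G)=1$. In the final write-up I would also add a sentence explaining why the torus permits capture despite having no corner to pin the robber against: the chief never needs a boundary, because each diagonal step at once recaptures control of the robber's column and cuts the row-gap by one, and the robber's sole legal evasion --- a sideways step --- does nothing to prevent this.
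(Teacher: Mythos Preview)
Your argument is correct but takes a genuinely longer route than the paper's. After the first move into the robber's column (your column-control position), the paper has the chief make one more move: slide \emph{along that column} all the way to the vertex the robber just vacated. This single long vertical step lands the chief adjacent to the robber in the robber's own row, and one checks---exactly your $d=1$ calculation---that all five of the robber's options are now protected. So the paper's strategy wins in three cop moves regardless of $n$.

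Your strategy instead has the chief take only single-step diagonal moves after reaching column control, shrinking the cyclic row-gap by one per round and hence needing up to $\lfloor n/2\rfloor$ rounds to finish. The induction is set up cleanly and the edge case $d=n/2$ is handled, so nothing is wrong; the trade-off is that your argument would survive verbatim if the chief's range were bounded (e.g.\ a king rather than a queen), whereas the paper's two-move trick leans essentially on the chief's unlimited column slide. The paper buys brevity and a constant capture-time bound; your version buys robustness to weakening the chief.
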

\begin{proof}
Place the robber (on foot) and the cop (a chief) on $G$.  Move the cop to the column in which the robber lies.  The robber, wanting to avoid capture, must move one vertex horizontally.  Next, move the cop to the vertex previously occupied by the robber, adjacent to the robber's current position.  Note that the cop protects all vertices the robber could subsequently move to, proving that the cop will capture the robber. 
\end{proof}

Let us expand this scenario (one chief, all other cops on foot) by considering cases where the robber can move differently.  We focus on what we call speedy robbers.

\begin{defn} \label{speedyrobberdef}
A robber (or cop) is said to be \textbf{$m$-speedy} ($m < n$) if the allowable vertices for a move are those in the same row or column located up to a distance of $m$ from the vertex occupied by the robber. 
\end{defn}

Note that a $1$-speedy robber is the same as a robber on foot.  Before investigating the general case, the following lemma allows a chief to ``trap" a robber into one particular row.\\  

\begin{lem} \label{queenpositionlemma}
A cop that is a chief can guarantee that that an $m$-speedy robber will never leave the row it originally occupies.
\end{lem}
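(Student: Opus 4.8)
The plan is to have the chief permanently occupy the same \emph{column} as the robber. The key observation is that if the chief and the robber ever lie in a common column, then the chief (moving as a queen) protects every vertex of that column, so the robber cannot make any vertical move — and since an $m$-speedy robber changes rows only by a vertical move, it is then confined to its current row. So it suffices to show the chief can first reach, and thereafter maintain, the position ``same column as the robber, different row.''

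First I would handle the setup. Since the cops place first, I would put the chief in some fixed row $\rho$, say the first row, at an arbitrary column; the robber then chooses a vertex, say in row $r$ and column $c_0$. On the chief's first turn, slide the chief horizontally along row $\rho$ to column $c_0$. If $r=\rho$ this move lands on the robber and the cops have already won (so the robber has certainly not left its row); otherwise the chief now sits in column $c_0$ with $\rho\ne r$, exactly the position I want.

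Next I would run the maintenance step as an induction. Suppose at the start of a robber's turn the chief is at $(\rho,c)$ with $\rho\ne r$ and the robber is at $(r,c)$. The robber's own vertex lies in column $c$ and is therefore protected, so staying put loses next turn; any vertical move also keeps the robber in column $c$, hence also loses. The only capture-avoiding option is a horizontal move to some $(r,c')$ with $c'\ne c$ — in particular the robber stays in row $r$. On the following cop turn I would slide the chief horizontally within row $\rho$ to $(\rho,c')$, restoring ``same column, different row'' (the chief has only ever moved horizontally, so its row is still $\rho\ne r$). By induction the robber never leaves row $r$.

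The hard part is really just a modeling point rather than a computation: one has to notice that the chief must chase the robber's column, not its row. Were the chief to sit in the robber's row instead, then for $m\ge 2$ the robber could hop two or more squares vertically onto a vertex lying off the queen's row, column, and both of its diagonals, and so escape — this is exactly why a sharper strategy is impossible here and why later results introduce extra cops. Beyond that, the only things to verify are routine: each tracking move is a legal queen move (it is always a move along the chief's own row), the chief's row never collides with the robber's row (it never changes after the first turn), and the small placement edge case above; and one should note that this argument only confines the robber, it does not capture him, consistent with the subsequent lemmas.
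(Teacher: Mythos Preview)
Your argument is correct and follows essentially the same strategy as the paper's proof: move the chief into the robber's column, then mirror each horizontal move of the robber so that the chief perpetually shares the robber's column and hence blocks all vertical moves. You are simply more careful than the paper about the initial placement (fixing a row $\rho$ for the chief and handling the $r=\rho$ edge case) and you make explicit the invariant ``same column, different row,'' but the core idea is identical.
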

\begin{proof}
Place the $m$-speedy robber and a cop that is a chief on $G$.  Move the cop into the column occupied by the robber.  Because every vertex in this column is protected by the cop, to avoid capture, the robber only has the option to move to another vertex in the row it occupies.  Have the cop mirror the move of the robber; that is, if the robber moves $k$ units horizontally, move the cop $k$ units horizontally.  The result will be that the cop will lie once again in the same column as the robber.  Continuing this, the robber will only be able to move within the row it originally occupied.
\end{proof}

We move then to addressing the question of determining the cop number if the robber is $m$-speedy ($m > 1$), one cop is a chief, and all remaining cops are on foot.  We can focus solely on determining the number of cops on foot both necessary and sufficient for capturing the robber.  \\

\begin{lem} \label{queenpositionlemma2}
To determine $c(G)$ when the robber is $m$-speedy and cops consist of a chief and all others on foot, it suffices to consider the situation where the cop that is a chief is consistently protecting the entire column (or row) that the robber occupies. 
\end{lem}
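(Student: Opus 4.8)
The plan is to present this as a reduction resting on Lemma~\ref{queenpositionlemma}. First I would recall what that lemma gives: a chief can, in a bounded number of moves, move into the column currently occupied by the robber and then, by mirroring each horizontal move the robber makes, stay in the robber's column forever, so the $m$-speedy robber is confined for the remainder of the game to the row it started in. Two features of this maneuver are what make the present lemma work: it uses no help from any cop on foot, and once it is underway the chief's moves are completely forced, so the chief is at once fully occupied and entirely self-sufficient. Since a capture, if it happens, happens after only finitely many moves, the finite initial segment during which the chief installs this configuration is irrelevant to who wins; so for the purpose of computing $c(G)$ we may assume that from the start the chief protects the robber's column and the robber is pinned to a single cyclic row, and the problem becomes: how many cops on foot, working alongside the perpetually-threatening chief, are needed to capture a robber trapped on that row?

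Second, I would check that this restriction affects neither of the two estimates that pin down $c(G)$. The easy direction is that an upper bound proved in the restricted setting transfers: if a column-guarding chief together with $j$ cops on foot has a winning strategy, then in the unrestricted game the chief simply performs the Lemma~\ref{queenpositionlemma} maneuver --- legitimate by that lemma --- which forces the robber to behave exactly as a row-confined robber, and the $j$ on-foot cops then run the restricted strategy; hence $c(G) \le j+1$. The other direction is that forcing the chief to column-guard costs the cops nothing, because against the kind of robber it produces, column-guarding is an optimal use of the chief: it confines the robber to one dimension and compels it to vacate its vertex on every turn, which is the strongest pressure the chief can apply (and a robber that is $2$-speedy or faster can never be captured by the chief acting alone in any case), all while leaving the on-foot cops entirely free to do their work.

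I expect the technical heart to be that last point --- that an arbitrary winning strategy for the unrestricted game can be converted into one in which the chief column-guards, equivalently, that making the chief column-guard never helps the robber. I would handle this by a move-for-move simulation between the unrestricted game and the restricted one: in the restricted game the chief executes the lock-on while each on-foot cop copies the move its counterpart makes in a shadow run of the unrestricted strategy, and one argues by induction that the robber is never in a position more favorable than its shadow, since the shadow robber can imitate any escape the real robber attempts but not conversely (the real robber being both row-confined and perpetually threatened). Aligning the two games across the finite lock-on phase, and confirming that the robber's legal moves in the restricted game form a subset of what the simulation allows, is the delicate bookkeeping; the rest is routine. If that simulation is heavier than desired, the alternative is to assert column-guarding as the chief's optimal strategy and proceed, since every later use of this lemma only needs that assertion.
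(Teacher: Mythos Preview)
Your approach is genuinely different from the paper's, and the difference is worth noting. You treat the lemma as a strategic reduction: use Lemma~\ref{queenpositionlemma} to argue that the chief can always enact column-guarding, and then try to show via a shadow-game simulation that no other chief strategy can do better. The paper instead gives a purely static covering argument: at the instant of capture all $4m+1$ of the robber's allowable vertices must be protected, a foot cop covers at most $3$ of them, and a case analysis on the chief's position (same row/column versus on a diagonal versus off-diagonal, near versus far) shows she covers at most $2m+1$ of them, with that maximum attained precisely when she sits in the robber's row or column. This counting directly justifies restricting attention to the row/column configuration when computing $c(G)$, with no simulation needed.

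The place where your plan is shaky is exactly the part you flag as ``delicate'': the simulation that converts an arbitrary winning strategy into a column-guarding one. Having the on-foot cops copy their moves from a shadow run while the chief does something different does not obviously preserve capture, because in the unrestricted strategy the chief may be threatening vertices (diagonally, or in a different row) that the column-guarding chief does not, and the on-foot cops' shadow moves may have been relying on that coverage. Your fallback---simply asserting that column-guarding is optimal for the chief---is essentially what the paper proves, but it proves it by the concrete vertex count rather than by assertion. So your upper-bound direction is fine and matches how the paper later uses Lemma~\ref{queenpositionlemma}, but for the lower-bound direction the paper's counting argument is both shorter and tighter than the simulation you outline; if you want your route to go through, you would need to replace the simulation sketch with something like that vertex-coverage case analysis.
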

\begin{proof}
The cop number of any graph corresponds to the minimal number of cops needed to, after some finite number of moves, protect all vertices that the robber could move to.  If the robber is $m$-speedy, then cops must protect $4m+1$ vertices.  A cop on foot can protect at most $3$ of those vertices, while the chief can protect a variable number of vertices, depending on her position relative to the robber. Thus, maximizing the number of vertices protected by the chief minimizes the minimum number of cops on foot required to protect the remaining vertices.

If the chief is located in the same column (or row) as the robber, then only the other $2m$ vertices in the robber's row (or column) that he could move to need to be protected.  Let us consider the other possible locations of the chief, first assuming the chief is located both within $m$ rows and $m$ columns of the robber's position.  There are three situations to consider.

If the chief and the robber are located on the same diagonal, then the chief protects $3$ vertices that the robber could potentially move to (the robber's current position plus one in the column the chief occupies and one in the row the chief occupies).  Otherwise, the chief protects either $4$ or $6$ of the vertices the robber could move to.  When she is located greater than $8$ units away from the robber's position, she protects $4$ of these vertices; when the distance is $8$ or fewer, she protects $6$.  See Fig. \ref{queenprotecting}.

In these three cases (the chief protecting $3$, $4$, or $6$ of the necessary vertices), there remain $4m-2$, $4m-3$, or $4m-5$ remaining vertices for the cops on foot to protect, respectively.  It is only the case that this is less than $2m$ (fewer than when the chief protects an entire row or column) when $m < 1$ (not possible), $m < \frac{3}{2}$ (corresponds to the robber being on foot), or $m < \frac{5}{2}$, respectively.  Thus, we need only consider the last situation, when $m=2$.  But, when $m = 2$, vertices located within $2$ units of the robber's position must be in the same row, column, or diagonal as the robber.  Consequently, there are no vertices that the chief could occupy and protect $6$ of the vertices the robber could potentially move to.  In all cases, then, the chief protects a maximal number of the desired vertices when she is located in the same row or column as the robber, as was claimed.\\
\end{proof}

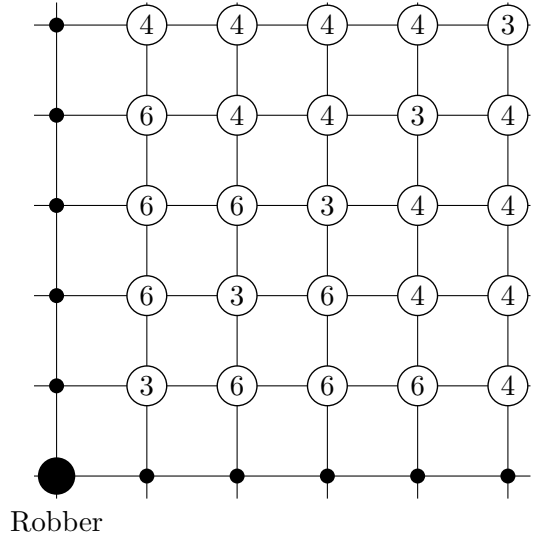
\begin{figure} 
\begin{center}
\begin{tikzpicture}[scale=.6]

 \foreach \x in {-4,-2,0,2,4,6}
   \foreach \y in {0,2,4,6,8,10}{
   \node at (\x,\y)[circle,fill,inner sep=2pt]{};
   }; 

  \node at (-4,0)[circle,fill,inner sep=5pt]{};
  \node at (-4,-1)[]{Robber};
  
  
  \draw (-4.5,0)--(6.5,0);
  \draw (-4.5,2)--(6.5,2);
  \draw (-4.5,4)--(6.5,4);
  \draw (-4.5,6)--(6.5,6);
  \draw (-4.5,8)--(6.5,8);
  \draw (-4.5,10)--(6.5,10);
  
  \draw (-4,-.5)--(-4,10.5);
  \draw (-2,-.5)--(-2,10.5);
  \draw (0,-.5)--(0,10.5);
  \draw (2,-.5)--(2,10.5);
  \draw (4,-.5)--(4,10.5);
  \draw (6,-.5)--(6,10.5);
  
  \node at (-2,2)[circle,fill,inner sep=5.5pt]{};
  \node at (-2,2)[circle,fill=white,inner sep=2pt]{$3$};
  \node at (0,4)[circle,fill,inner sep=5.5pt]{};
  \node at (0,4)[circle,fill=white,inner sep=2pt]{$3$};
  \node at (2,6)[circle,fill,inner sep=5.5pt]{};
  \node at (2,6)[circle,fill=white,inner sep=2pt]{$3$};
  \node at (4,8)[circle,fill,inner sep=5.5pt]{};
  \node at (4,8)[circle,fill=white,inner sep=2pt]{$3$};
  \node at (6,10)[circle,fill,inner sep=5.5pt]{};
  \node at (6,10)[circle,fill=white,inner sep=2pt]{$3$};

  \node at (0,2)[circle,fill,inner sep=5.5pt]{};
  \node at (0,2)[circle,fill=white,inner sep=2pt]{$6$};
  \node at (2,2)[circle,fill,inner sep=5.5pt]{};
  \node at (2,2)[circle,fill=white,inner sep=2pt]{$6$};
  \node at (4,2)[circle,fill,inner sep=5.5pt]{};
  \node at (4,2)[circle,fill=white,inner sep=2pt]{$6$};
  \node at (2,4)[circle,fill,inner sep=5.5pt]{};
  \node at (2,4)[circle,fill=white,inner sep=2pt]{$6$};
  \node at (-2,4)[circle,fill,inner sep=5.5pt]{};
  \node at (-2,4)[circle,fill=white,inner sep=2pt]{$6$};
  \node at (-2,6)[circle,fill,inner sep=5.5pt]{};
  \node at (-2,6)[circle,fill=white,inner sep=2pt]{$6$};
  \node at (-2,8)[circle,fill,inner sep=5.5pt]{};
  \node at (-2,8)[circle,fill=white,inner sep=2pt]{$6$};
  \node at (0,6)[circle,fill,inner sep=5.5pt]{};
  \node at (0,6)[circle,fill=white,inner sep=2pt]{$6$};
  
  \node at (4,4)[circle,fill,inner sep=5.5pt]{};
  \node at (4,4)[circle,fill=white,inner sep=2pt]{$4$};
  \node at (4,6)[circle,fill,inner sep=5.5pt]{};
  \node at (4,6)[circle,fill=white,inner sep=2pt]{$4$};
  \node at (6,2)[circle,fill,inner sep=5.5pt]{};
  \node at (6,2)[circle,fill=white,inner sep=2pt]{$4$};
  \node at (6,4)[circle,fill,inner sep=5.5pt]{};
  \node at (6,4)[circle,fill=white,inner sep=2pt]{$4$};
  \node at (6,6)[circle,fill,inner sep=5.5pt]{};
  \node at (6,6)[circle,fill=white,inner sep=2pt]{$4$};
  \node at (6,8)[circle,fill,inner sep=5.5pt]{};
  \node at (6,8)[circle,fill=white,inner sep=2pt]{$4$};
  \node at (0,8)[circle,fill,inner sep=5.5pt]{};
  \node at (0,8)[circle,fill=white,inner sep=2pt]{$4$};
  \node at (2,8)[circle,fill,inner sep=5.5pt]{};
  \node at (2,8)[circle,fill=white,inner sep=2pt]{$4$};
  \node at (-2,10)[circle,fill,inner sep=5.5pt]{};
  \node at (-2,10)[circle,fill=white,inner sep=2pt]{$4$};
  \node at (0,10)[circle,fill,inner sep=5.5pt]{};
  \node at (0,10)[circle,fill=white,inner sep=2pt]{$4$};
  \node at (2,10)[circle,fill,inner sep=5.5pt]{};
  \node at (2,10)[circle,fill=white,inner sep=2pt]{$4$};
  \node at (4,10)[circle,fill,inner sep=5.5pt]{};
  \node at (4,10)[circle,fill=white,inner sep=2pt]{$4$};
  
\end{tikzpicture}
\end{center}
\caption{Number of robber's potential moves protected by chief} \label{queenprotecting}
\end{figure}

\begin{thm}
If all cops are on foot except one that is a chief and the robber is $m$-speedy, then 

\begin{equation*}
c(G) = 2\ceil*{\dfrac{m}{3}} + 1.
\end{equation*}
\end{thm}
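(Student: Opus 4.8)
The plan is to reduce the whole game to a one–dimensional pursuit on a single row and then establish the two inequalities $c(G)\le 2\ceil{m/3}+1$ and $c(G)>2\ceil{m/3}$ separately. By Lemma~\ref{queenpositionlemma} the chief can force the $m$-speedy robber to stay forever in its original row, and by keeping its own column matched to the robber's it protects every vertex of that column, including the robber's own vertex; hence from some point on the robber must, on each of its turns, move to a \emph{different} vertex of its row at distance between $1$ and $m$. Since Definition~\ref{speedyrobberdef} imposes no clear‑path condition, the robber may pass over intervening cops (this is what makes the count grow with $m$). By Lemma~\ref{queenpositionlemma2} the chief contributes nothing further, so the game becomes: the robber sits on a cycle $C_n$ (its row), must move $1$ to $m$ steps left or right each turn, and may not land on a vertex protected by a cop on foot. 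Call a set of $\ceil{m/3}$ cops on foot, placed on the cycle at spacings of $3$ so that their protected vertices form a block of exactly $m$ consecutive vertices, a \emph{barrier}. A barrier is impassable: to reach the far side of a width‑$m$ block a robber starting just outside it would need a move of length at least $m+1$; moreover this stays true after all $\ceil{m/3}$ cops of the barrier shift one step in unison, since the protected set remains a block of $m$. Thus a barrier confines the robber to one of the two arcs it cuts out of $C_n$.

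For the upper bound, split the $2\ceil{m/3}$ cops on foot into two barriers. Because $G$ is finite and the chief already keeps the robber on its row, the cops can in finitely many moves be brought onto that row and assembled into two barriers whose blocks leave the robber strictly between them, trapped on a finite arc $A$. Now shrink $A$: on each cops' turn, advance toward the robber those barrier(s) \emph{not} currently adjacent to the robber's vertex (both if the robber is interior to $A$, only the far one if the robber leans against a barrier). In each case $|A|$ strictly decreases, the robber stays inside $A$, and no barrier is ever shifted onto the robber's vertex, so the robber never escapes past a block. After finitely many rounds $A$ is a single vertex $p$, with one block occupying $[p-m,p-1]$ and the other $[p+1,p+m]$; the robber is then forced to move onto a vertex protected by some barrier cop, who captures it on the following cops' turn. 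Hence $2\ceil{m/3}$ cops on foot suffice and $c(G)\le 2\ceil{m/3}+1$.

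For the lower bound, suppose there are only $2\ceil{m/3}$ cops in total; by Lemma~\ref{queenpositionlemma2} we may assume one is a chief and the remaining $2\ceil{m/3}-1$ are on foot, and, as above, the robber is confined to a cycle and forced to move each turn. For the robber at a vertex $p$ to be unable to survive, all $2m$ of its destinations, namely the blocks $[p-m,p-1]$ and $[p+1,p+m]$, must be protected by cops on foot. Each such cop protects at most $3$ vertices, so covering the $m$ vertices of $[p+1,p+m]$ requires at least $\ceil{m/3}$ cops, and likewise for $[p-m,p-1]$; but no single cop on foot can help cover both blocks, since the only length‑$3$ window of vertices meeting both is $\{p-1,p,p+1\}$, which requires a cop standing on $p$, i.e.\ a capture that has already occurred. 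So catching the robber would require at least $2\ceil{m/3}$ cops on foot, one more than are available; at every robber's turn some destination is unprotected, and the robber evades forever. Therefore $c(G)>2\ceil{m/3}$, and together with the upper bound $c(G)=2\ceil{m/3}+1$.

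The step I expect to be the crux is the shrinking argument in the upper bound. The delicate point is that a barrier must be advanced toward the robber without ever being moved onto the robber's vertex, since otherwise the robber---still free to jump up to $m$---could slip to the far side of a block whose width is only $m$; this is exactly why a barrier is given width $m$ and why, when the robber is pressed against one barrier, only the opposite barrier is advanced. Making the case analysis exhaustive and verifying that the arc genuinely shrinks each round is the heart of the proof. The initial marshalling of the cops onto the row into two correctly placed barriers is routine, using only that the board is finite and that the robber occupies a single vertex at a time.
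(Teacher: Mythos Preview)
Your argument follows essentially the same plan as the paper's: the chief pins the robber to a single row and mirrors its horizontal moves, two groups of $\lceil m/3\rceil$ on-foot cops form impassable blocks on either side, and the blocks are walked inward until the robber is trapped; the lower bound is the same vertex count ($2m$ destinations, at most three covered per on-foot cop, no single cop helping both sides). Two small remarks: first, a barrier of $\lceil m/3\rceil$ cops spaced three apart protects $3\lceil m/3\rceil$ consecutive vertices, not ``exactly $m$'' --- this only strengthens your impassability claim; second, your care to advance only the barrier \emph{not} adjacent to the robber is actually a genuine refinement of the paper's proof, which moves every on-foot cop one step toward the robber on each turn and does not discuss the case $m\equiv 0\pmod 3$, where the block has width exactly $m$ and shifting it onto the robber's vertex would uncover the far end and let an $m$-step jump slip past.
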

\begin{proof}
Suppose all cops are on foot except a single cop who is a chief, with the robber being $m$-speedy.  By Lem. \ref{queenpositionlemma2}, we can assume that the chief protects the entire column in which the robber is located.  In the row the robber is located in, then, there are $m$ vertices on either side of the robber that the robber could potentially move to.  Since each cop on foot protects $3$ vertices in a given row, and if there are $f$ cops protecting the vertices on one side (the planar view of $G$) of the robber, we must have

\begin{equation*}
3f \geq m.
\end{equation*}

Thus, the minimal number of cops on foot on either side of the robber must be

\begin{equation*}
\ceil*{\dfrac{m}{3}},
\end{equation*}

\noindent placing a lower bound on the cop number of $G$ of

\begin{equation*}
c(G) \geq  2\ceil*{\dfrac{m}{3}} + 1.
\end{equation*}

To prove that this lower bound is actually the cop number of $G$, we must show that number of cops can guarantee capture of the $m$-speedy robber.  
Lemma \ref{queenpositionlemma2} allows us to initially place the chief in the same column as the robber, thus protecting this entire column.  Assume that each move by the robber is mirrored by the chief; that is, if the robber moves horizontally $k$ units, the chief then moves horizontally $k$ units and consequently protects the entire column the robber is now located in.  

Now, the robber is only allowed to move within the row it occupies.  For notation, let $f = \ceil*{\frac{m}{3}}$.  We claim that $2f$ cops on foot can capture the robber.

\begin{figure} 
\begin{center}
\begin{tikzpicture}[scale=.6]

 \foreach \x in {0,2,4,6,8,10,12,14,16}
   \foreach \y in {0,2}{
   \node at (\x,\y)[circle,fill,inner sep=2pt]{};
   }; 

   \node at (0,2)[circle,fill,inner sep=5pt]{};
   \node at (0,.8)[]{Cop};
   \draw (0,-.3)--(0,.4);
   \draw (0,1.4)--(0,2.3);
   \node at (6,2)[circle,fill,inner sep=5pt]{};
   \node at (6,.8)[]{Cop};
   \draw (6,-.3)--(6,.4);
   \draw (6,1.4)--(6,2.3);
   \node at (12,2)[circle,fill,inner sep=5pt]{};
   \node at (12,.8)[]{Cop};
   \draw (12,-.3)--(12,.4);
   \draw (12,1.4)--(12,2.3);
  
   \node at (17,2)[]{$\dots$};
   \node at (17,0)[]{$\dots$}; 
   
  \draw (-.5,0)--(16.5,0);
  \draw (-.5,2)--(16.5,2);
 
  \foreach \x in {2,4,8,10,14,16}{ 
  \draw (\x, -.3)--(\x, 2.3);
  };

\end{tikzpicture}
\end{center}
\caption{Initial first row placement of cops on foot} \label{placingcopsonfoot}
\end{figure}
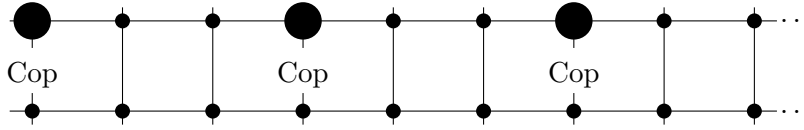

Initially place these $2f$ cops on foot in the first row of $G$, with one cop occupying the leftmost vertex and cops placed subsequently on every third vertex of the first row.  See Fig. \ref{placingcopsonfoot}.  Every time the cops move, have every cop on foot move one vertex down until they have reached the same row the robber is in.  Then, because $G$ is on the torus, we can consider the robber with $f$ cops located both on his right and on his left, as in Fig. \ref{robberbetweencops}

\begin{figure} 
\begin{center}
\begin{tikzpicture}[scale=.4]

 \foreach \x in {0,4,6,18,20,22,24} {
   \node at (\x,0)[circle,fill,inner sep=2pt]{};
   }; 
 
 \foreach \x in {2,8,16,22} {
  \node at (\x,0)[circle,fill,inner sep=5pt]{};
  }
  
  \node at (10,0)[]{$\dots$};
  \node at (14,0)[]{$\dots$};

  \draw (-.5,0)--(8.9,0);
  \draw (15.1,0)--(24.5,0); 
  
  \node at (2,-1.7)[]{Cop};
  \node at (8,-1.7)[]{Cop};
  \node at (16,-1.7)[]{Cop};
  \node at (22,-1.7)[]{Cop};

  \draw (11.1,0)--(12.9,0);
  \draw[fill=white] (12,0) circle (18pt);
  \node at (12,-1.7)[]{Robber};
\end{tikzpicture}
\end{center}
\caption{$f$ cops on either side of the robber} \label{robberbetweencops}
\end{figure}
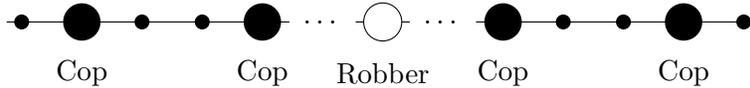

Every time the cops move, choose to have the cops on foot move one vertex towards the robber.  This preserves the string of $3f$ protected vertices on both the left and the right of the robber.  Moreover, because 

\begin{equation*}
f = \ceil*{\frac{m}{3}},
\end{equation*}

\noindent we have that 

\begin{equation*}
m \leq 3f.
\end{equation*}
 
This means that the robber will always remain between the $f$ cops on his left and the $f$ cops on his right (i.e., the robber cannot ``leapfrog" the entire string of $f$ cops on his left or right).  As the number of vertices between these two sets decreases with every move by the cops, the robber will eventually be captured.
\end{proof}

\section{Questions}
\label{futurequestions}

There are numerous immediate questions that arise from considering just these two scenarios.  Without varying the type of graph we are considering $(n \times n$ toroidal chess graphs), it is natural to look at the various other combinations of types of cops and robber.  What if the cops consist of a single chief and the rest as knights?  What if some of the cops are speedy?  In all of these cases, vary how the robber can move.  If the robber moves as a rook in chess, is a knight, or can move like a chief, how does the cop number change?  

In this paper we defined only movements mimicking the queen, pawns, knights, and rooks in chess.  If moves are defined in ways not mirroring chess pieces (such as a ``speedy knight" or a ``slow queen"), how does that change the game?  Additionally, every one of these situations can also be addressed from the lazy cop perspective \cite{Sullivan}, where only a single cop is allowed to move during each of the cops' turns.  What if cops are only allowed to move a certain number of times before the must ``take a mandatory break?"  If the robber is considered invisible (in both the adversarial and drunken situations), how does that impact the game \cite{Kehagias}?

The only graphs considered here are $n \times n$ chess graphs on the torus.  How is the game different, in all situations, when the chess graph is planar?  What if other graph-theoretic properties are introduced into the game (for example, weighting edges and restricting the ``total weight" allowed on any given move by the cops)?  What if the chess graphs, or portions of it, are directed?

It is not considered in this paper, but investigating the efficiency of a cop's strategy would be quite interesting \cite{Komarov}.  

The research done here was completed prior to one author (Hahn) taking an introduction-to-proofs course.  She was a full co-researcher in the process, despite having minimal exposure to post-calculus mathematics.  Similarly, the questions listed here are very accessible by undergraduates at any level and could serve as a door to experiencing mathematics research.


\begin{thebibliography}{0}
\bibitem{Alon} N. Alon, A. Mehrabian, Chasing a fast robber on planar graphs and random graphs, \textit{J. Graph Theory} \textbf{78}(2) (2015) 81--96.

\bibitem{Bal} D. Bal, A. Bonato, W. B. Kinnersley, P. Pralat, Lazy cops and robbers played on random graphs and graphs on surfaces, \textit{J. Comb.} \textbf{7}(4) (2016) 727--642.

\bibitem{Bonato} A. Bonato, R. Nowakowski, \textit{The Game of Cops and Robbers on Graphs}. American Mathematical Society, Providence, RI, 2011.

\bibitem{Dawes} R. Dawes, Some pursuit-evasion problems on grids, \textit{Inform. Process. Lett.} \textbf{42}(5) (1992) 241--247.

\bibitem{Frankl} P. Frankl, Cops and robbers in graphs with large girth and Cayley graphs, \textit{Discrete Appl. Math.} \textbf{17}(3) (1987) 301--305.


\bibitem{Kehagias} A. Kehagias, D. Mitsche, P. Pralat, Cops and invisible robbers: The cost of drunkenness, \textit{ Theoret. Comput. Sci.} \textbf{463} (2012) 133--147.

\bibitem{Komarov} N. Komarov, \textit{Capture Time in Variants of Cops \& Robbers Games}, Ph.D. dissertation, Dartmouth Coll., Hanover, NH, 2013.


\bibitem{Neufeld} Neufeld, S. A pursuit-evasion problem on a grid, \textit{Inform. Process. Lett.} \textbf{58}(1) (1996) 5--9.


\bibitem{Quillot} P. Quillot, A short note about pursuit games played on a graph with a given genus, \textit{J. Combin. Theory} Series B \textbf{38} (1985) 89--92.

\bibitem{Sullivan} B. Sullivan, N. Townsend, M. Werzanki, An introduction to lazy cops and robbers on graphs, \textit{College Math. J.} \textbf{48}(5) (2017) 322-333.

\bibitem{Winkler} R. Nowakowski, P. Winkler, Vertex-to-vertex pursuit in a graph, \textit{Discrete Math.} \textbf{43}(2-3) (1983) 235--239.








\end{thebibliography}
\end{document}